\documentclass[12pt,reqno]{amsart}
\usepackage{amsmath,amsthm,amssymb,amsfonts,amscd}
\usepackage{mathrsfs}
\usepackage{bbding}
\usepackage{graphicx}
\usepackage{hyperref}
\usepackage[top=2.5cm, bottom=2.5cm, left=2.5cm, right=2.5cm]{geometry}
\usepackage{color}
\usepackage{xcolor}
\usepackage[english]{babel}
\usepackage{mathtools}
\hypersetup{
    colorlinks,
    linkcolor={red!50!black},
    citecolor={blue!50!black},
    urlcolor={blue!80!black}
}

\numberwithin{equation}{section}

\theoremstyle{plain}
\newtheorem{theorem}[equation]{Theorem}
\newtheorem*{theorem*}{Theorem}
\newtheorem{lemma}[equation]{Lemma}
\newtheorem{corollary}[equation]{Corollary}

\theoremstyle{definition}

\theoremstyle{remark}
\newtheorem{remark}[equation]{Remark}

\renewcommand{\Re}{\operatorname{Re}}

\newcommand{\sym}{\operatorname{sym}}

\newcommand{\bs}{\backslash}

\renewcommand{\mod}{\operatorname{mod}\,}

\newcommand{\dd}{\hspace*{0.1em}\mathrm{d}}% rectified d in integrals

\begin{document}

\title{Mixed moment of $GL(2)$ and $GL(3)$ $L$-functions in the level
aspect}
\author{Zhiwei Liu}
\address{Data Science Institute and School of Mathematics \\ Shandong University \\ Jinan \\Shandong 250100 \\China}
\email{zwliu@mail.sdu.edu.cn}
\subjclass[2020]{Primary 11F67}
\date{}

\begin{abstract}
In this paper, we establish an asymptotic formula of mixed moments of $L(1/2,f)$ and $L(1/2,\sym^2f)$ for cusp forms $f$ of weight $k$, prime level $q$ and real primitive nebentypus $\chi$ modulo $q$ in the level aspect with a power-saving error term. As an application, we deduce the simultaneous non-vanishing of $L(1/2, f)$ and $L(1/2, \sym^2 f)$ for sufficiently large prime levels.  
\end{abstract}

\keywords{ Mixed moment, L-functions, symmetric square, non-vanishing }
\thanks{This work was partially supported by the National Key R\&D Program of China (No. 2021YFA1000700) and 
the Scientific Research Innovation Capability Support Project for Young Faculty (No. SRICSPYF-ZY2025158).}
\maketitle
\section{Introduction} \label{sec:intr}
The study of moments of $L$-functions is one of the central topics in modern analytic number theory. The asymptotic behavior of these moments, particularly the leading order main terms, is predicted by the random matrix theory \cite{KS2000}, the theory of multiple Dirichlet series \cite{DGH2003} or the recipe in \cite{KeatingSnaith2005}. Estimating the moments of families of $L$-functions provides deep insight into the distribution of their values and the underlying symmetry types, and there are many applications, such as the non-vanishing problem and the subconvexity problems, see e.g. \cite{duke1995,DFI2002,IS2000}.

In this paper, we are interested in the mixed moment of $L(1/2,f)$ and $L(1/2,\sym^2 f)$ for cusp forms with real primitive nebentypus modulo a prime $q$, and we deduce a simultaneous non-vanishing result.

Let $k\ge 2$ be an integer, $q\geq 1$ be a large prime and $\chi$ be a real primitive character to modulus $q$ satisfying $\chi(-1)=(-1)^{k}$. Let $\mathcal{S}_{k}(q,\chi)$ be the Hilbert space of holomorphic cusp forms of weight $k$ with respect to the congruence subgroup $\Gamma_0(q)$ and nebentypus $\chi$. Since $\chi$ is primitive, we can choose a normalized orthogonal basis $\mathcal{B}_k(q,\chi)$ consisting of newforms. For $f\in \mathcal{B}_k(q,\chi)$, let $ \lambda_f(n)$ be the $n$-th Hecke eigenvalue of $f$, the Hecke $L$-function is defined by
\begin{align*}
    L(s,f)=\sum_{n=1}^{\infty} \lambda_f(n)n^{-s},\ \Re (s)>1.
\end{align*}
Following \cite{blomer2008}, for $\Re (s)>1$ the symmetric square $L$-function is given by
\begin{equation*}
    L(s,\sym^2f)=\frac{L(s,f\otimes f)}{L(s,\chi)}=\zeta^{(q)}(2s)\sum_{m\mid q^{\infty}}\tau(m)\sum_{(n,q)=1}\frac{\lambda_f((mn)^2)}{(mn)^s},
\end{equation*}
where $\zeta^{(q)}(2s):=\prod_{p\nmid q}(1-p^{-2s})^{-1}$. Define
\begin{equation*}
    \omega_f^{-1}=\frac{\Gamma(k-1)}{(4\pi)^{k-1}\left\| f  \right\|^2 }.
\end{equation*}
to be the spectral weight. By a standard argument (see e.g. \cite{topics,DFI2002,HL1994}), we have
\begin{align*}
    \frac{1}{q(\log q)^3}\ll \omega_f^{-1}\asymp \frac{1}{qL(1,\sym^2f)}\ll \frac{1}{q^{1-\varepsilon}}.
\end{align*}

Now we are ready to state our main result.
\begin{theorem}\label{main thm}
    Let $q$ be a prime and let $k\geq 4$ be an integer. Let $\nu=0$ if $k$ is even and $\nu=1$ if $k$ is odd. Then
    \begin{equation}
        \sum_{f\in \mathcal{B}_k(q,\chi)}\omega_f^{-1}L(1/2,f)L(1/2,\sym^2f)=\frac{\zeta(3/2)}{2}\log q +C_k+O_{k,\varepsilon}(q^{-\frac{1}{4}+\delta_k+\varepsilon}),
    \end{equation}
where $\delta_k=\frac{1}{8(k-3/2)}$ and $C_k=\zeta(3/2)\left( 2\gamma-\log 2-\frac{3}{2}\log \pi+\frac{{\zeta}'(3/2)}{\zeta(3/2)}+\psi\left(k-\frac{1}{2}\right)+\frac{1}{2}\psi\left(\frac{3}{4}-\frac{\nu}{2}\right)\right)$ is a constant and the implied constant in the error term depends only on $k$ and $\varepsilon$.

Here $\gamma$ denotes the Euler constant and $\psi(s)=\Gamma'(s)/\Gamma(s)$ denotes the digamma function.
\end{theorem}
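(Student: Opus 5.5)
We will prove Theorem~\ref{main thm} with the standard approximate functional equation plus Petersson trace formula approach. The first step is to write each central value as a rapidly convergent sum. For $L(1/2,f)$ the conductor is $q^2$, and since $\chi$ is real the root number is $\pm1$ times $\eta_f=\lambda_f(q)q^{1/2}$-type Atkin--Lehner data. Because $\chi$ is real and primitive, $f$ need not be self-dual in general, so $\bar f = f\otimes\bar\chi$. This means the dual sum carries $\overline{\lambda_f(n)}=\chi(n)\lambda_f(n)$ for $(n,q)=1$, multiplied by the root number $\epsilon_f$.

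For $L(1/2,\sym^2 f)$, which has conductor $q^2$ and archimedean factor $\Gamma_{\mathbb R}(s+\nu)\Gamma_{\mathbb C}(s+k-1)$ (the source of $\psi(3/4-\nu/2)$ and $\psi(k-1/2)$ in $C_k$), we use the expansion above:
\[
L(1/2,\sym^2 f)=\sum_{m\mid q^\infty}\sum_{(n,q)=1}\frac{\tau(m)\lambda_f(m^2n^2)}{(mn)^{1/2}}V\Bigl(\frac{m n^2\cdots}{q}\Bigr)+(\text{dual}).
\]
We then linearize the product $\lambda_f(\ell)\lambda_f(m^2n^2)$ by Hecke multiplicativity, which reduces the moment to sums of the form $\sum_f\omega_f^{-1}\lambda_f(a)\overline{\lambda_f(b)}$ possibly twisted by $\epsilon_f$. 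For the root number terms we will use that $\epsilon_f$ is expressible through $\lambda_f(q)$ and a Gauss sum, so that the twisted harmonic sums again become Petersson sums with $b$ replaced by $bq$ or similar.

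Next we apply the Petersson formula
\[
\sum_f\omega_f^{-1}\lambda_f(a)\overline{\lambda_f(b)}=\delta_{a=b}+2\pi i^{-k}\sum_{c\equiv0(q)}\frac{S_\chi(a,b;c)}{c}J_{k-1}\Bigl(\frac{4\pi\sqrt{ab}}{c}\Bigr).
\]
The diagonal $a=b$ forces $\ell=m^2n^2$. This gives a double Dirichlet series roughly $\sum_{n}n^{-3/2}\cdot(\text{shifted }\ell)$ whose leading Mellin pole at $s=0$ is double (the $\zeta(3/2)$ and one extra zeta factor). A contour shift then produces $\frac{\zeta(3/2)}{2}\log q+C_k$, with the gamma-factor derivatives and the Euler-product derivatives giving exactly the digamma and $\zeta'/\zeta(3/2)$ terms. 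The dual and root-number diagonal pieces should contribute either to this constant or be $O(q^{-1/2+\varepsilon})$; this bookkeeping is routine but must be checked carefully against the stated $C_k$.

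The main obstacle is the off-diagonal Kloosterman contribution. Here $\ell\le q^{1+\varepsilon}$ and $mn^2$ are up to about $q^{1+\varepsilon}$, so $\sqrt{ab}$ can be as large as about $q^{3/2}$ against $c\ge q$. The Bessel function is therefore not trivially small and a naive bound only gives $O(q^{\varepsilon})$. We plan to split according to $\sqrt{ab}/c$.

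In the range $\sqrt{ab}\le c\,q^{-\eta}$ we use $J_{k-1}(x)\ll x^{k-1}$, which saves $q^{-\eta(k-1)}$. Since only the $L(1/2,\sym^2f)$ side is long, we need essentially $k-3/2$ powers of saving, and this is where $\delta_k=\frac1{8(k-3/2)}$ should come from when the two ranges are balanced.

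In the complementary range we apply Poisson or Voronoi summation to the $\ell$-sum (length $\approx q$) modulo $c$, using the Weil bound for the resulting character sums. This should give a saving of $q^{-1/4}$, and optimizing $\eta$ yields the error $q^{-1/4+\delta_k+\varepsilon}$. The requirement $k\ge4$ ensures that the Bessel decay beats the trivial bound.
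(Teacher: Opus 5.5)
Your skeleton (approximate functional equations, Petersson, contour shifts on the diagonal, splitting the off-diagonal by the size of the Bessel argument, Poisson in the $L(1/2,f)$-variable) is the paper's, but as written it cannot deliver the stated asymptotic.

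First, the setup is off. $L(s,f)$ has conductor $q$, not $q^2$. Since $\chi^2$ is trivial, $\sym^2 f$ also has conductor $q$, so both sums have length $\sqrt q$. The coefficient $\frac{\zeta(3/2)}{2}\log q$ comes exactly from $q^{v/2}$ at the double pole of $\zeta^{(q)}(1+2v)/v$; your conductor $q^2$ would give $\zeta(3/2)\log q$. Likewise the real gamma factor is $\Gamma_{\mathbb R}(s+1-\nu)$, whereas yours would produce $\psi(1/4+\nu/2)$.

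The first genuine gap is in the diagonal. Its Mellin integrand $\zeta^{(q)}(3/2+2u+v)\zeta^{(q)}(1+2v)q^{(u+v)/2}/(uv)$ has the polar line $v=-1/2-2u$, which meets $v=0$ at $u=-1/4$. After taking the residue at $v=0$, the remaining $u$-integral therefore has a double pole at $u=-1/4$. This produces a term of size $q^{-1/8}\log q$, larger than the claimed error $q^{-1/4+\delta_k}\le q^{-1/5}$. The paper kills it by choosing $G_1$ with a double zero at $u=-1/4$, e.g.\ $G_1(u)=(1-16u^2)^2$. For a generic weight the term is really there, so the diagonal does not simply give $\frac{\zeta(3/2)}{2}\log q+C_k$ up to $O(q^{-1/2+\varepsilon})$ as you assert.

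The second gap is the off-diagonal, where you neither isolate the hard term nor supply the arithmetic input that makes Poisson useful. Without the root number the Bessel argument is $4\pi\sqrt m\,n/(cq)\ll q^{-1/4+\varepsilon}$, and $J_{k-1}(x)\ll x^{k-1}$ plus Weil suffices. With the root number, $\overline{\lambda_f(q)}$ turns the first index into $qm$, and the argument becomes $4\pi\sqrt m\,n/(c\sqrt q)$, up to $q^{1/4+\varepsilon}$.

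In that term, Poisson in $m$ modulo $c$ followed by the Weil bound gains nothing. For $M\asymp N\asymp q^{1/2}$ and $c\asymp\sqrt MN/\sqrt q$, the dual sum has length $\ll cq^{\varepsilon}/M<1$, so only the zero frequency survives. Bounding $\sum_{a\ (\mod c)}S_\chi(qa,n^2;cq)$ termwise by Weil then returns $q^{1/8+\varepsilon}$, which is just the trivial bound.

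What is needed is the exact evaluation, for $(c,q)=(n,q)=1$,
\[
\sum_{a\ (\mod c)}S_\chi(qa,n^2;cq)e\Bigl(\frac{am}{c}\Bigr)=c\,e\Bigl(\frac{-n^2\bar m\bar q}{c}\Bigr)\chi(n^2\bar c)\tau(\bar\chi)\quad\text{if }(m,c)=1,
\]
and $0$ otherwise. This has size $cq^{1/2}$ and kills the zero frequency for $c>1$. Combined with integration by parts to localize $m$, it gives $O(q^{-1/4+\delta+\varepsilon})$ for $c\ll q^{\delta-1/2}\sqrt MN$. For larger $c$ the bound $J_{k-1}(x)\ll x^{k-1}$ gives $O(q^{-1/8-\delta(k-5/2)})$. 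Equating the two is where $\delta_k=\frac{1}{8(k-3/2)}$ comes from; your ``$k-3/2$ powers of saving'' is read off from the answer, not derived.

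Note also that the zero frequency at $c=1$ (modulus $q$) does survive, with size $q^{-1/8}$ on individual dyadic blocks, so it must be computed rather than bounded. It contributes exactly $2\sum_{(n,q)=1}n^{-3/2}W(n/\sqrt q)\bigl(1-V(n^2/\sqrt q)\bigr)$, which is the counterpart of the diagonal term above. Added to the diagonal, it gives $2\sum_{(n,q)=1}n^{-3/2}W(n/\sqrt q)$, which equals the main term up to $O(q^{-1/4+\varepsilon})$ for any $G_1$.
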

\begin{remark}
    Since $\delta_k$ is decreasing in $k$, the error term is uniformly $O(q^{-\frac{1}{5}+\varepsilon})$ for all $k\geq 4$.
\end{remark}

A direct application is the following simultaneous non-vanishing result.
\begin{corollary}
    Given an integer $k\geq 4$, there exists a computable constant $q_k$ such that for any prime $q>q_k$, there exists a cusp form $f\in \mathcal{B}_k(q,\chi)$ such that
    \begin{equation}
        L(1/2,f)L(1/2,\sym^2f)\ne 0.
    \end{equation}
\end{corollary}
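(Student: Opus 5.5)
The corollary follows from Theorem \ref{main thm} by positivity of the weights. For every $f\in\mathcal{B}_k(q,\chi)$ the spectral weight $\omega_f^{-1}=\Gamma(k-1)/((4\pi)^{k-1}\|f\|^2)$ is strictly positive. The plan is to show that the weighted sum of $L(1/2,f)L(1/2,\sym^2 f)$ is nonzero for large $q$; then at least one summand, and hence at least one product, must be nonzero.

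\textbf{Step 1: the main term dominates.} By Theorem \ref{main thm},
\[
\sum_{f\in \mathcal{B}_k(q,\chi)}\omega_f^{-1}L(1/2,f)L(1/2,\sym^2f)=\frac{\zeta(3/2)}{2}\log q +C_k+E_k(q),
\]
with $|E_k(q)|\le A_{k,\varepsilon}\,q^{-1/4+\delta_k+\varepsilon}$. Here $A_{k,\varepsilon}$ depends only on $k$ and $\varepsilon$.

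Fix $\varepsilon=1/40$. Since $k\ge4$, we have $\delta_k\le 1/20$, so the exponent satisfies $-1/4+\delta_k+\varepsilon\le -1/4+1/20+1/40<0$. Hence $|E_k(q)|\le A_k$ for all $q$, with $A_k:=A_{k,1/40}$.

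The constant $\zeta(3/2)>0$ and $C_k$ is an explicit finite real number. It is assembled from $\gamma$, $\log 2$, $\log\pi$, $\zeta'(3/2)/\zeta(3/2)$ and digamma values at positive arguments, so $|C_k|$ is finite and computable.

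\textbf{Step 2: choice of $q_k$.} Take
\[
q_k=\exp\Big(\frac{2(|C_k|+A_k)}{\zeta(3/2)}\Big).
\]
For every prime $q>q_k$ this gives $\frac{\zeta(3/2)}{2}\log q>|C_k|+A_k$, so the weighted sum is strictly positive, and in particular nonzero. Since each $\omega_f^{-1}$ is a nonzero real number, some $f\in\mathcal{B}_k(q,\chi)$ has $L(1/2,f)L(1/2,\sym^2f)\neq0$.

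\textbf{Step 3: computability of $q_k$.} The only real obstacle is that $q_k$ must be computable. This requires the implied constant $A_k$ in the error term of Theorem \ref{main thm} to be effective.

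That constant comes from standard inputs: the Petersson formula, bounds for Kloosterman sums and Bessel functions, and Stirling estimates. None of these involves ineffective results such as Siegel-type bounds. I would therefore track the constants through the proof of the theorem to confirm that $A_k$ is explicitly computable, which then makes $q_k$ computable as well.
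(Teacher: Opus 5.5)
Your proposal is correct and follows the paper's own (one-line) proof: by Theorem \ref{main thm}, the growing main term $\frac{\zeta(3/2)}{2}\log q$ eventually beats both $C_k$ and the decaying error, so the weighted sum is nonzero, and hence some product $L(1/2,f)L(1/2,\sym^2 f)$ is nonzero. Like you, the paper does not track the constants explicitly, so computability of $q_k$ rests on the implied constant in Theorem \ref{main thm} being effective, which is what your Step 3 assumes.
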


 Moments of $GL(2)$ $L$-functions in the level aspect have been studied extensively. For example, Duke, Friedlander and Iwaniec \cite{DFIIII} studied the amplified fourth moments and got a subconvexity result. Kowalski, Michel and Vanderkam \cite{KMV2000} studied the mollified fourth moments and also obtained the subconvexity result. For moments of symmetric square $L$-functions in the level aspect, Iwaniec and Michel \cite{iwaniecmichel2001} proved the upper bound for the second moment with forms having trivial nebentypus. Blomer \cite{blomer2008} gave asymptotic formulas for the twisted first moment and twisted second moment with forms having real primitive nebentypus. For mixed moments of $GL(2)$ and $GL(3)$ $L$-functions in the level aspect, Munshi and Sengupta \cite{Munshi2018} proved an asymptotic formula for the first moment with forms having trivial nebentypus. In the weight aspect, there is a series of work by  Balkanova, Bhowmik, Frolenkov, and Raulf, see \cite{BBFR2019,BBFR2020}. Huang and Li \cite{huang2024mixedmomentsrmgl2} also proved an asymptotic formula for the mixed moments in the spectral aspect.

The question of simultaneous non-vanishing has also received a lot of attention. For example, Ramakrishnan and Rogawski \cite{RR2005} proved a simultaneous non-vanishing result for $GL(2)\times GL(1)$ and $GL(2)$ $L$-functions. More recent work includes \cite{Lsc2015,Munshi2018,MG2023,MV2002,Kummari2024}.

\subsection{Sketch of the proof}
The proof of Theorem \ref{main thm} proceeds as follows. Applying the approximate functional equation for $L(1/2,f)$ and $L(1/2,\sym^2f)$ and the Petersson trace formula, the mixed moment splits as
\begin{align*}
\sum_{f}\omega_f^{-1}L(1/2,f)L(1/2,\sym^2f)=\mathcal{D}_1+\mathcal{D}_2+2\pi i^{-k}(\mathcal{O}_1+\mathcal{O}_2),
\end{align*}
where $\mathcal{D}_1$,$\mathcal{D}_2$ are diagonal terms and $\mathcal{O}_1$,$\mathcal{O}_2$ are off-diagonal terms.

For $\mathcal{D}_1$, we shift contours carefully in the Mellin inversion formulas for the weight functions. 
A crucial technical point is that we choose the weight function $G_1(u)$ in approximate functional equation of $L(1/2,f)$ to have double zero at $u=-1/4$ to avoid the appearance of the secondary main term of size $O(q^{-1/8+\varepsilon})$. By a standard contour shift argument, we pick up the residue at $u=v=0$ to obtain the main term $\frac{\zeta(3/2)}{2}\log q +C_k$. The other diagonal term $\mathcal{D}_2$ is negligible.

The off-diagonal term $\mathcal{O}_1$ is handled directly by Weil's bound and the estimate $J_{k-1}(x)\ll x^{k-1}$. The main difficulty lies in $\mathcal{O}_2$. We need to bound the following sum 
\begin{align*}
    (NM)^{-1/2}\sum_{m\asymp M}\sum_{n\asymp N}\sum_{c\asymp C}\frac{S_{\chi}(mq,n^2;cq)}{cq}J_{k-1}\left(\frac{4\pi \sqrt{m}n}{c\sqrt{q}}\right),
\end{align*}
where $M,N,C$ are dyadic parameters and $M,N\ll q^{1/2+\varepsilon}$. Then we apply the Poisson summation formula in the $m$-sum and get
\begin{align*}
N^{-1/2}M^{1/2}\sum_{n\asymp N}\sum_{c\asymp C}\frac{1}{c^2q}&\sum_{m\in \mathbb{Z}}\sum_{a\ (\mod c)}S_{\chi}(aq,n^2;cq)e\left ( \frac{am}{c} \right ) \\
    &\times\int_{\mathbb{R}} \mathcal{V}(x)J_{k-1}\left(\frac{4\pi \sqrt{M}n\sqrt{x}}{c\sqrt{q}}\right)e\left ( -\frac{mMx}{c} \right ) dx.
\end{align*}

We introduce a small parameter $\delta>0$ to separate the $c$-sum into two ranges. For large $c$, i.e. $C\gg q^{\delta-1/2}\sqrt{M}N$, the Bessel function is small and trivial estimation together with Weil's bound gives an error of $O(q^{-1/8-\delta(k-5/2)+\varepsilon})$. For small $c$, i.e. $C\ll q^{\delta-1/2}\sqrt{M}N$, we evaluate the character sum explicitly and obtain square root cancellation. The analytic part is then treated by the stationary phase method. We further distinguish two subcases depending on whether $\frac{\sqrt{M}N}{c\sqrt{q}}\gg q^{\varepsilon}$ or not. In the former range the stationary phase analysis yields $O(q^{-1/4+\varepsilon})$, while in the latter it yields $O(q^{-1/4+\delta+\varepsilon})$; the dominant contribution is the latter.

Equating the errors from the large and small $c$ regimes by choosing $\delta=\delta_k:=\frac{1}{8(k-3/2)}$ produces the final bound $O(q^{-1/4+\delta_k+\varepsilon})$, which implies a uniform error term $O(q^{-\frac{1}{5}+\varepsilon})$ for all $k\geq 4$. The simultaneous non-vanishing result then follows directly.

\subsection{Structure of the paper}
The paper is organized as follows. In \S \ref{Preliminaries}, we review necessary preliminaries on automorphic forms and $L$-functions.
 In \S \ref{Applying the Petersson trace formula}, we begin our proof with applying the approximate functional equations and the Petersson trace formula to decompose the mixed moment into diagonal and off-diagonal terms. The diagonal term is evaluated in \S \ref{The diagonal term} by contour integration, yielding the main term. The off-diagonal contributions are bounded in \S \ref{The off-diagonal term} using Weil's bound, Poisson summation, and stationary phase analysis. The proof is completed in \S \ref{Proof of main thm} by optimizing the free parameter to obtain the power-saving error term.

\section{Preliminaries}\label{Preliminaries}
In this section, we collect the necessary definitions, notations, and auxiliary results that will be used throughout the paper.

\subsection{Cusp forms with nontrivial nebentypus}
 Let $e(x)=\exp(2\pi ix)$. For $f\in \mathcal{B}_k(q,\chi)$, we have the following Fourier expansion:
\begin{align}
    f(z)=\sum_{n\geq 1}\lambda_{f}(n)n^{\frac{k-1}{2}}e(nz),
\end{align}
where $\lambda_{f}(n)$ are Hecke eigenvalues satisfying Hecke relations:
\begin{equation}\label{heckerelation}
    \lambda_{f}(m)\lambda_{f}(n)= \sum_{ d\mid (m,n)}\chi(d)\lambda_{f}\left(\frac{mn}{d^2}\right)
\end{equation}
for all $m,n \geq 1$ and
\begin{equation}\label{dualoflambda}
    \overline{\lambda_{f}(n)}=\overline{\chi(n)}\lambda_{f}(n)
\end{equation}
for $(n,q)=1$.

For $f\in \mathcal{B}_k(q,\chi)$, the Hecke $L$-function $L(s,f)$ is entire. Let
\begin{align*}
    L_{\infty}(s,f)=(2\pi)^{-s}\Gamma\left(s+\frac{k-1}{2}\right).
\end{align*}
Let $\bar{f}$ denote the dual form with Fourier coefficients $\lambda_{\bar{f}}(n) = \overline{\lambda_f(n)}$. The completed $L$-function 
\begin{align*}
\Lambda(s,f)=q^{-\frac{s}{2}}L_{\infty}(s,f)L(s,f)
\end{align*}
satisfies the functional equation
\begin{equation}
    \Lambda(s,f)=\varepsilon_f\Lambda(1-s,\bar{f}),
\end{equation}
where $\varepsilon_f=i^{k}\overline{\lambda}_f(q)\overline{\tau}_{\chi}q^{-\frac{1}{2}}$ is the root number, and $\tau_{\chi}=\sum_{x\ (\mod q)}\chi(x)e\left(\frac{x}{q}\right)$ is the Gauss sum.

Our analysis is based on the Petersson trace formula.
\begin{theorem}\cite[Theorem 14.5]{ikanalytic}
    For $m,n\geq 1$, we have
    \begin{equation}
        \frac{\Gamma(k-1)}{(4\pi)^{k-1}}\sum_{f\in \mathcal{B}_k(q,\chi)}\frac{\overline{\lambda_{f}(m)}\lambda_{f}(n)}{\left \| f  \right \|^2 }=\delta_{m,n}+2\pi i^{-k}\sum_{q\mid c} \frac{1}{c}S_{\chi}(m,n;c)J_{k-1}\left(\frac{4\pi \sqrt{mn}}{c}\right),
    \end{equation}
    where 
    \begin{align*}
        S_{\chi}(m,n;c)=\sum_{\substack{d\ (\mod c)\\ (d,c)=1}}\chi(d)e\left(\frac{md+n\bar{d}}{c}\right)
    \end{align*}
     is the twisted Kloosterman sum for $q\mid c$, and 
\begin{align*}
    \left \| f  \right \|^2=\iint_{\Gamma_0(q)\bs\mathbb{H}}\left | f(z) \right | ^2y^k\frac{dxdy}{y^2}
\end{align*}
is the square of the norm of $f$.  
\end{theorem}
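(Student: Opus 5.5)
The statement just above is the Petersson trace formula for $\mathcal{S}_k(q,\chi)$, which the paper quotes from \cite[Theorem 14.5]{ikanalytic}. I would prove it by the classical Poincaré series method.

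For $m\ge1$ define the Poincaré series
\begin{equation*}
P_m(z)=\sum_{\gamma\in\Gamma_\infty\bs\Gamma_0(q)}\overline{\chi}(\gamma)\,j(\gamma,z)^{-k}e(m\gamma z),
\end{equation*}
where $\chi(\gamma)=\chi(d)$ for $\gamma=\begin{pmatrix}a&b\\c&d\end{pmatrix}$. Since $k\ge 4>2$ and $\chi(-1)=(-1)^k$ (so that $\overline{\chi}(\gamma)j(\gamma,z)^{-k}$ is well defined on cosets), the series converges absolutely and defines an element of $\mathcal{S}_k(q,\chi)$.

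The first key step is to unfold the Petersson inner product against $f\in\mathcal{B}_k(q,\chi)$. This gives
\begin{equation*}
\langle f,P_m\rangle=\int_0^\infty\!\!\int_0^1 f(z)\,\overline{e(mz)}\,y^{k-2}\dd x\dd y=\frac{\Gamma(k-1)}{(4\pi m)^{k-1}}\,\lambda_f(m)m^{\frac{k-1}{2}}.
\end{equation*}
Since $\mathcal{B}_k(q,\chi)$ is an orthogonal basis, we may expand $P_m=\sum_f \langle P_m,f\rangle f/\|f\|^2$. Reading off the $n$-th Fourier coefficient of both sides then shows that the left-hand side of the theorem equals $(\sqrt{m}/\sqrt{n})^{k-1}\cdot\frac{(4\pi)^{k-1}}{\Gamma(k-1)}\cdot\frac{\Gamma(k-1)}{(4\pi)^{k-1}}$ times the suitably normalized $n$-th coefficient $\hat P_m(n)$ of $P_m$, up to the conjugation convention. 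Here the conjugate in $\overline{\lambda_f(m)}$ comes from $\overline{\langle f,P_m\rangle}$.

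The second step is to compute $\hat P_m(n)$ directly.
\begin{itemize}
\item The identity cosets ($c=0$) contribute $e(mz)$, which produces $\delta_{m,n}$.
\item For the cosets with $c>0$, $q\mid c$, we write $\gamma z=a/c-1/(c(cz+d))$. Then we split $d=d_0+rc$ with $d_0$ running modulo $c$ and $r\in\mathbb{Z}$, and apply Poisson summation in $r$; equivalently, we integrate over $x\in\mathbb{R}$.
\item The arithmetic part collapses to $\sum_{d_0 \ (\mod c)}\overline{\chi}(d_0)e\big((m a+n d_0)/c\big)$ with $a=\overline{d_0}$. After the substitution $d\mapsto\bar d$ this becomes $S_\chi(m,n;c)$; with real $\chi$, as used in the paper, the conjugate is harmless.
\item The analytic part is the integral $\int_{\mathbb{R}}(x+iy)^{-k}e\big(-m/(c^2(x+iy))-nx\big)\dd x$. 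Shifting the contour and using the integral representation of $J_{k-1}$, this equals $2\pi i^{-k}c^{-1}(n/m)^{\frac{k-1}{2}}J_{k-1}(4\pi\sqrt{mn}/c)\,e(niy)$.
\end{itemize}
Assembling these pieces gives the stated formula; the powers of $m$ and $n$ cancel against the normalization $\lambda_f(n)n^{(k-1)/2}$.

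The main obstacle is the evaluation of the Bessel integral and keeping track of the constants and signs ($i^{-k}$, conjugations, the choice of $\chi$ versus $\overline\chi$). Absolute convergence for $k\ge 4$ lets us interchange sums and integrals freely, so the only delicate point is this careful bookkeeping.
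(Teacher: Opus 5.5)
Your proposal is correct and is the standard Poincar\'e-series proof, which is how the source the paper cites (\cite[Theorem 14.5]{ikanalytic}) establishes the formula; the paper itself gives no argument beyond that citation. Two small bookkeeping points. First, after $d\mapsto\bar d$ one has $\overline{\chi}(\bar d)=\chi(d)$ exactly, so reality of $\chi$ is not needed. Second, your displayed integral omits the factor $c^{-k}$ coming from $(cz+d)^{-k}=c^{-k}(z+d/c)^{-k}$, so as written it equals $c^{k}$ times the value $2\pi i^{-k}c^{-1}(n/m)^{\frac{k-1}{2}}J_{k-1}(4\pi\sqrt{mn}/c)\,e(niy)$ that you state.
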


\subsection{Symmetric square $L$-functions}
The symmetric square $L$-function $L(s,\sym^2f)$ is entire in the complex plane except a possible simple pole at $s=1$. Let $L_{\infty}(s,\sym^2f)=2^{-s}\pi^{-\frac{3}{2}s}\Gamma(s+k-1)\Gamma\left(\frac{s+1-\nu}{2}\right)$, where
\begin{align*}
    \nu=\begin{cases}
      0,    &k\ \ \text{even}\\
      1,&k\ \ \text{odd}
    \end{cases}
\end{align*}
The completed $L$-function 
\begin{equation}
    \Lambda(s,\sym^2f)=q^{\frac{s}{2}}L_{\infty}(s,\sym^2f)L(s,\sym^2f)
\end{equation}
satisfies the functional equation \cite{blomer2008}
\begin{equation}
    \Lambda(s,\sym^2f)=\overline{ \Lambda(1-\bar{s},\sym^2f)}.
\end{equation}

\subsection{Approximate functional equations}
To study the central values $L(1/2,f)$ and $L(1/2,\sym^2f)$, we employ the approximate functional equations, which express these values as essentially finite sums with rapidly decaying weight functions.
By standard contour shift one can prove the following approximate functional equations. 
\begin{lemma}
    We have
\begin{equation}
    L(1/2,f)=\sum_{m=1}^{\infty}\frac{\lambda_f(m)}{m^{1/2}}V\left ( \frac{m}{\sqrt{q}}  \right ) +\varepsilon_f\sum_{m=1}^{\infty}\frac{\overline{\lambda_f(m)}}{m^{1/2}}V\left ( \frac{m}{\sqrt{q}}  \right ),
\end{equation}
where 
\begin{equation}    
V(y)=\frac{1}{2\pi i}\int_{(1)}\frac{L_{\infty}(1/2+u,f)}{L_{\infty}(1/2,f)}\frac{G_1(u)}{u}y^{-u},
\end{equation}
and $G_{1}(u)$ is a holomorphic even function in $\left |\Re u  \right | \leq 1$ satisfying: $G_1(u)$ has double zero at $u=-1/4$, $G_1(0)=1$ and $G_1(u)\ll (1+u)^{-k}e^{\frac{\pi}{2}\left |  u\right | }$, e.g. $G_1(u)=(1-16u^2)^2$.
Moreover, for any $a\geq 0$ we have
\begin{equation*}
    V(y)=1+O_{k}(y^{\frac{k-1}{2}}),
\end{equation*}
and
\begin{equation*}
    V^{(a)}(y)\ll_{a,A,k} y^{-a}(1+y)^{-A}.
\end{equation*}
\end{lemma}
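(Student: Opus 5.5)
The plan is the standard contour-shift proof of an approximate functional equation. Start from
\[
I(X)=\frac{1}{2\pi i}\int_{(1)}\Lambda(1/2+u,f)\,\frac{G_1(u)}{u}\,du,
\]
where $\Lambda(s,f)=q^{-s/2}L_\infty(s,f)L(s,f)$. Since $L(s,f)$ is entire and $\Lambda$ decays rapidly in vertical strips (Stirling), I would move the contour to $\Re u=-1$. The only pole crossed is the simple one at $u=0$, with residue $\Lambda(1/2,f)G_1(0)=\Lambda(1/2,f)$; the factor $G_1(u)$ has zeros but no poles, so nothing else appears.

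On the line $\Re u=-1$ I would substitute $u\mapsto -u$ and use three facts:
\begin{itemize}
\item the evenness of $G_1$;
\item the oddness of $1/u$;
\item the functional equation $\Lambda(1/2-u,f)=\varepsilon_f\Lambda(1/2+u,\bar f)$.
\end{itemize}
This turns the shifted integral into $-\varepsilon_f$ times the analogous integral for $\bar f$ on $\Re u=1$. Hence
\[
\Lambda(1/2,f)=I_f+\varepsilon_f I_{\bar f}.
\]
Dividing by $q^{-1/4}L_\infty(1/2,f)$ and expanding $L(1/2+u,f)$ as its absolutely convergent Dirichlet series on $\Re u=1$ gives the stated formula. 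The factor $q^{-u/2}m^{-u}$ combines into $(m/\sqrt q)^{-u}$, and $\lambda_{\bar f}(m)=\overline{\lambda_f(m)}$. The dual $L$-function has the same gamma factor, so the weight function is the same $V$ in both sums.

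For the convergence and contour shift, the growth condition on $G_1$ is what is needed. On vertical lines $|\Gamma(s+\tfrac{k-1}{2})|$ decays like $e^{-\frac{\pi}{2}|t|}|t|^{\Re s+\frac{k}{2}-1}$. This cancels the factor $e^{\frac{\pi}{2}|u|}$ in the bound on $G_1$, and the remaining polynomial factor $(1+|u|)^{-k}$ gives absolute convergence. One also needs a polynomial (convexity-type) bound for $L(1/2+u,f)$ in the strip, which follows from Phragmén–Lindelöf. The example $G_1(u)=(1-16u^2)^2$ is even, satisfies $G_1(0)=1$, and has a double zero at $u=\pm1/4$; it is a polynomial, so it certainly satisfies the growth bound.

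For the properties of $V$:
\begin{itemize}
\item \emph{Behaviour near $0$.} Shift the contour in the definition of $V$ to the left, to $\Re u=-\tfrac{k-1}{2}+\epsilon$, just before the first pole of $\Gamma(1/2+u+\tfrac{k-1}{2})$ at $u=-k/2$. Collecting the residue $1$ at $u=0$ gives $V(y)=1+O(y^{(k-1)/2})$. Here one needs $G_1$ holomorphic and of controlled growth on that line. The hypothesis ``holomorphic in $|\Re u|\le 1$'' is a bit restrictive for this, but the explicit polynomial example is entire, so I would argue with it, or with any $G_1$ holomorphic in a wider strip.
\item \emph{Derivative bounds.} Differentiate under the integral: each derivative brings down a factor $(-u)\cdots y^{-u-a}$, and the polynomial factor is absorbed by the Gamma decay. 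Shifting the contour to $\Re u=A$ for large $y$ gives decay $y^{-a-A}$. For small $y$, shifting to $\Re u$ slightly positive, the $u=0$ pole disappears after one differentiation, which gives $y^{-a}$.
\end{itemize}

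The only delicate point is bookkeeping: keeping track of the exponential factors from Stirling against the growth allowed for $G_1$. There is no real obstacle.
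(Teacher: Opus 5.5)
Your argument is the standard contour-shift proof: move $\Re u$ from $1$ to $-1$, collect the residue $\Lambda(1/2,f)$ at $u=0$, and fold the left-hand integral back onto $\Re u=1$ via $u\mapsto -u$, the evenness of $G_1$ and the functional equation. This is exactly what the paper invokes by citing Iwaniec--Kowalski, Theorem 5.3, and Duke--Friedlander--Iwaniec, Lemma 3.1. Working with the entire polynomial $G_1(u)=(1-16u^2)^2$ for the far contour shifts is the right way around the too-weak hypotheses on $G_1$ in the statement.

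There are some bookkeeping slips. The conductor factor must be $q^{+s/2}$; the paper's $q^{-s/2}$ is a typo and is incompatible with $\Lambda(s,f)=\varepsilon_f\Lambda(1-s,\bar f)$. With $q^{-s/2}$ you get $q^{-u/2}m^{-u}=(m\sqrt q)^{-u}$, whereas $q^{u/2}m^{-u}=(m/\sqrt q)^{-u}$ is what produces $V(m/\sqrt q)$. Also, shift exactly to $\Re u=-(k-1)/2$, which is allowed since the first pole of $\Gamma(u+k/2)$ is at $u=-k/2$. For $a\ge 1$, shift to $\Re u=0$, where the $1/u$ pole is cancelled. Using these lines rather than $\varepsilon$-offset ones gives the stated exponents without an $\varepsilon$-loss.
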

\begin{proof}
    See \cite[Theorem 5.3]{ikanalytic} and \cite[Lemma 3.1]{DFIIII}.
\end{proof}
\begin{remark}
    We need $G_1(u)$ to have double zero at $u=-1/4$ to cancel the double pole of $\zeta^{(q)}(3/2+2u)H(u)$ so that we could avoid the occurrence of the secondary main term of size $O(q^{-1/8+\varepsilon})$. 
\end{remark}

\begin{lemma}\label{afe of sym}
    We have
    \begin{equation}
        L(1/2,\sym^2f)=2\sum_{m\mid q^{\infty}}\tau(m)\sum_{(n,q)=1}\frac{\lambda_f((mn)^2)}{\sqrt{mn}}W\left(\frac{mn}{\sqrt{q}}\right),
    \end{equation}
    where
    \begin{equation*}
        W(y)=\frac{1}{2\pi i}\int_{(1)}\frac{L_{\infty}(1/2+u,\sym^2f)}{L_{\infty}(1/2,\sym^2f)}\zeta^{(q)}(1+2u)\frac{G_2(u)}{u}y^{-u}du,
    \end{equation*}
    where $G_2(u)=1-4u^2$.
    Moreover, for any $B,\varepsilon> 0$ we have 
    \begin{equation}\label{ub of W}
        W(y)\ll_{B,\varepsilon, k} \left ( 1+y^{-1} \right ) \left (  1+y\right ) ^{-B}.
    \end{equation}
\end{lemma}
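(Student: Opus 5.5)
The approximate functional equation follows from the standard contour-shift argument (\cite[Theorem 5.3]{ikanalytic}), applied to the completed $L$-function $\Lambda(s,\sym^2f)$ and its self-dual functional equation. I will consider
\begin{equation*}
I=\frac{1}{2\pi i}\int_{(1)}\frac{\Lambda(1/2+u,\sym^2f)}{L_{\infty}(1/2,\sym^2f)\,q^{1/4}}\frac{G_2(u)}{u}\,du .
\end{equation*}
On $\Re u=1$ I expand $L(1/2+u,\sym^2f)=\zeta^{(q)}(1+2u)\sum_{m\mid q^\infty}\tau(m)\sum_{(n,q)=1}\lambda_f((mn)^2)(mn)^{-1/2-u}$ using the Dirichlet series representation recalled in the introduction. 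Interchanging sum and integral, which is legitimate by absolute convergence and the Stirling decay of the gamma ratio, gives $I=\sum_{m,n}\tau(m)\lambda_f((mn)^2)(mn)^{-1/2}W(mn/\sqrt q)$. The factor $q^{u/2}$ from the conductor combines with $(mn)^{-u}$ to produce the argument $mn/\sqrt q$, which is why $\zeta^{(q)}(1+2u)$ sits inside $W$.

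Next I shift the contour to $\Re u=-1$. The integrand has a simple pole at $u=0$ with residue $L(1/2,\sym^2f)$, since $G_2(0)=1$. I must also rule out other poles in the strip $-1\le\Re u\le1$. The possible pole of $L(s,\sym^2f)$ at $s=1$, i.e.\ $u=1/2$, is killed by the zero of $G_2(u)=1-4u^2$ at $u=1/2$; in fact $L(s,\sym^2f)$ is entire here, since $f$ has primitive nebentypus and so is not dihedral of the relevant type, but the factor $G_2$ makes this irrelevant. The gamma factors $\Gamma(1/2+u+k-1)$ and $\Gamma((3/2+u-\nu)/2)$ have no poles for $\Re u\ge -1$ when $k\ge 4$. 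The only possible exception is $\Gamma((3/2+u-\nu)/2)$ with $\nu=1$, which has a pole at $u=-1/2$ that is again cancelled by $G_2(-1/2)=0$. The horizontal segments vanish by Stirling together with the polynomial growth of $L$ in vertical strips (Phragmén--Lindelöf).

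On $\Re u=-1$ I substitute $u\mapsto -u$ and apply $\Lambda(1/2-u,\sym^2f)=\overline{\Lambda(1/2+\bar u,\sym^2f)}$. Since $\lambda_f(n^2)$ is real for $(n,q)=1$ (as $\chi$ is real and $\chi(n^2)=1$) and $\tau(m)$ is real, the Dirichlet coefficients are real, so this conjugate equals $\Lambda(1/2+u,\sym^2f)$ itself. As $G_2$ is even and $du/u$ changes sign, the remaining integral equals $-I$. This gives $L(1/2,\sym^2f)=2I$, which is the claimed identity.

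For the bound \eqref{ub of W}, when $y\ge1$ I shift the contour in $W$ to $\Re u=B$. This passes no poles, since $\zeta^{(q)}(1+2u)$ has its only pole at $u=0$, and yields $W(y)\ll y^{-B}$. When $y<1$ I shift to $\Re u=-1/2+\varepsilon$. This crosses the double pole at $u=0$ (from $1/u$ and from $\zeta^{(q)}(1+2u)$), whose residue is $\ll \log q+|\log y|\ll_\varepsilon q^{\varepsilon}y^{-\varepsilon}$, well within $1+y^{-1}$ after noting $\prod_{p\mid q}(1-p^{-1})^{-1}\ll1$. The remaining integral is $\ll y^{1/2-\varepsilon}$, using $\zeta(2\varepsilon+i t)\ll(1+|t|)^{1/2}$ and the exponential decay of the gamma ratio. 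The main point needing care is the bookkeeping of the poles of $\Gamma((s+1-\nu)/2)$ against the zeros of $G_2$ in the odd-weight case, and the reality of coefficients used in the self-duality step; neither is a genuine obstacle.
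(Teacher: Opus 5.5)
Your argument is the standard contour-shift proof that the paper simply cites, and the pole bookkeeping is right: $G_2(\pm 1/2)=0$ absorbs both a possible pole of $L(s,\sym^2 f)$ at $s=1$ and the pole of $\Gamma\bigl((s+1-\nu)/2\bigr)$ at $s=0$ when $\nu=1$. Your aside that $L(s,\sym^2f)$ is entire here is false in general: for odd $k$ the space $\mathcal{B}_k(q,\chi)$ can contain forms with CM by $\mathbb{Q}(\sqrt{-q})$, whose symmetric square has $\zeta(s)$ as a factor. As you say, $G_2$ makes this harmless.

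The genuine gap is the self-duality step. Turning the integral on $\Re u=-1$ into $-I$ requires all Dirichlet coefficients to be real, and you check this only for the part coprime to $q$. For $m=q^j$ the coefficient is $\tau(m)\lambda_f(m^2)\lambda_f(n^2)=(j+1)\lambda_f(q)^{2j}\lambda_f(n^2)$. The relation $\overline{\lambda_f(n)}=\chi(n)\lambda_f(n)$ is only available for $(n,q)=1$, and for $\lambda_f(q)$ one only has $|\lambda_f(q)|=1$, which does not make $\lambda_f(q)^2$ real. Without this, the functional equation $\Lambda(s,\sym^2f)=\overline{\Lambda(1-\bar s,\sym^2f)}$ turns the left-line integral into $-\overline{I}$. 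You then obtain $L(1/2,\sym^2 f)=I+\overline{I}=2\Re I$, not $2I$.

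Appealing instead to the self-duality of $\sym^2 f$ itself (its contragredient is $\sym^2 f$ twisted by $\chi^{-2}=1$) does give real coefficients. But then the Euler factor at $q$ is $(1-\lambda_f(q)^2q^{-s})^{-1}(1-\overline{\lambda_f(q)}^{\,2}q^{-s})^{-1}$, with coefficients $\sum_{m_1m_2=m}\lambda_f(m_1^2)\overline{\lambda_f(m_2^2)}$. These agree with $\tau(m)\lambda_f(m^2)$ only when $\lambda_f(q)^2\in\mathbb{R}$. So your argument does not establish the exact identity. What it does establish is the form used afterwards: the terms with $m>1$ are $O(q^{-1000})$ by the decay of $W$, and the $m=1$ part of $I$ is real since $W$ is real-valued. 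Hence $2\Re I=2I+O(q^{-1000})$.

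There is also a slip in the bound for $y<1$. The residue at the double pole $u=0$ does not grow with $q$, because
$\zeta^{(q)}(1+2u)=(1-q^{-1-2u})\zeta(1+2u)=\frac{1-q^{-1}}{2u}+(1-q^{-1})\gamma+\frac{\log q}{q}+O(u)$.
The only $\log q$ carries a factor $q^{-1}$, so the residue is $\ll_k 1+|\log y|\le 1+y^{-1}$ uniformly in $q$. As you wrote it, the estimate $q^{\varepsilon}y^{-\varepsilon}$ is not ``within $1+y^{-1}$'', since the implied constant in the lemma may not depend on $q$.
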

\begin{proof}
    See \cite[Theorem 5.3]{ikanalytic} and \cite[Ch.2]{blomer2008}.
\end{proof}

Notice that if $m>1$, then $m\mid q^{\infty}$ yields $m\geq q$. Thus for $n\geq 1$, we have $\frac{mn}{\sqrt{q}}\geq \sqrt{q}$. Choosing $B$ above sufficiently large in \eqref{ub of W}, the contribution from $m>1$ is negligible:
\begin{equation}
L(1/2,\sym^2f)=2\sum_{n=1}^{\infty}\frac{\lambda_f(n^2)}{\sqrt{n}}W\left(\frac{n}{\sqrt{q}}\right)+O(q^{-1000}).
\end{equation}

\subsection{Bessel functions}
To handle the oscillatory integrals arising in \S \ref{The off-diagonal term}, we use the following decomposition of $J_{\kappa}(x)$, whose asymptotic behavior is well understood, see \cite[p.206]{watson1995} and \cite[Ch.4]{iwaniecmichel2001}. 
\begin{lemma}\label{besselfunction}
We have
    \begin{equation}
        J_{\kappa}(x)=e^{ix}W(x)+e^{-ix}\overline{W}(x)
    \end{equation}
    where 
    \begin{align*}
        W(x)=\frac{e^{i((\pi \kappa/2)-\pi/4)}}{\Gamma(\kappa+\frac{1}{2})}\sqrt{\frac{2}{\pi x}}\int_{0}^{\infty}e^{-y}\left (y\left ( 1+\frac{iy}{2x} \right )    \right ) ^{\kappa-1/2} dy.
    \end{align*}
    When $\kappa$ is a positive integer, we derive (using the Taylor expansion for $J_{\kappa}(x)$ if $0<x\leq 1$ , or the above integral expression for W(x) if $x\geq 1$) the following bounds for the derivatives of $W$
    \begin{equation}
        x^jW^{(j)}(x)\ll \frac{x}{(1+x)^{3/2}}
    \end{equation}
     for any $j\geq 0$, the implied constant depending on $j$ and $\kappa$.
\end{lemma}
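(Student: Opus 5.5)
The decomposition $J_\kappa(x)=e^{ix}W(x)+e^{-ix}\overline{W}(x)$ comes from the classical Hankel representation, so I will start from the Poisson-type integral
\begin{equation*}
J_\kappa(x)=\frac{(x/2)^\kappa}{\Gamma(\kappa+\frac12)\Gamma(\frac12)}\int_{-1}^{1}e^{ixt}(1-t^2)^{\kappa-1/2}\dd t ,
\end{equation*}
valid for $\Re\kappa>-1/2$. I will deform the segment $[-1,1]$ into two vertical half-lines, $t=1+iy/x$ and $t=-1+iy/x$ with $y\ge 0$. This is justified because $e^{ixt}$ decays in the upper half plane and the integrand is holomorphic there, apart from the branch points at $\pm1$, where it is integrable. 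On the piece starting at $t=-1$, substitute $t=-1+iy/x$. Then $e^{ixt}=e^{-ix}e^{-y}$ and $1-t^2=(iy/x)(2-iy/x)=\frac{2iy}{x}\bigl(1-\frac{iy}{2x}\bigr)$. Collecting the powers of $x$, $2$ and $i$, together with $\Gamma(1/2)=\sqrt\pi$, gives exactly $e^{-ix}\overline{W}(x)$ with the stated $W$. The piece starting at $t=1$ similarly gives $e^{ix}W(x)$, and the phase $e^{i(\pi\kappa/2-\pi/4)}$ comes from $i^{\kappa+1/2}$. This is Watson's formula on p.~206, so I would mostly cite it and only check that the constants match. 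In particular I will verify that the two pieces are complex conjugates for real $x>0$, which follows from the symmetry $t\mapsto-\bar t$.

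\textbf{Bounds for $x\ge1$.} Write $W(x)=c_\kappa x^{-1/2}I(x)$ with $I(x)=\int_0^\infty e^{-y}y^{\kappa-1/2}(1+\frac{iy}{2x})^{\kappa-1/2}\dd y$. Since $|1+iy/(2x)|\le 1+y$, we get $I(x)\ll 1$ uniformly for $x\ge 1$. For derivatives, I will apply the operator $x\,\frac{d}{dx}$, which preserves the form of the integrand: it brings down the factor
\begin{equation*}
-(\kappa-\tfrac12)\,\frac{iy/(2x)}{1+iy/(2x)},
\end{equation*}
which is bounded in absolute value by a constant. Iterating, $(x\frac{d}{dx})^jI\ll_j1$. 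Since $x^jW^{(j)}(x)$ is a linear combination of $(x\frac{d}{dx})^iW$ for $i\le j$, and $x\frac{d}{dx}$ acting on $x^{-1/2}$ just multiplies it by a constant, we obtain $x^jW^{(j)}(x)\ll x^{-1/2}\asymp x/(1+x)^{3/2}$.

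\textbf{Bounds for $0<x\le1$.} Here I would not use the integral, since it does not show the needed smallness. Instead I use $W(x)=\frac12 e^{-ix}\bigl(J_\kappa(x)+iY\bigr)$-type relations. More cleanly, I take the Hankel function: $e^{ix}W(x)=\frac12 H^{(1)}_\kappa(x)$. However, $H^{(1)}_\kappa$ has a singularity like $x^{-\kappa}$ coming from $Y_\kappa$, so the bound $x^jW^{(j)}\ll x$ cannot hold for the literal $W$ near $0$.

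This is the main obstacle. The stated bound $x/(1+x)^{3/2}$ near $0$ is only consistent with a modified splitting. As in Iwaniec--Michel, one takes $W$ to be given by the integral for $x\ge1$, and for small $x$ one uses a smooth partition of unity: set $W(x)=\frac12 e^{-ix}J_\kappa(x)$ times a cutoff near $0$, glued smoothly to the Hankel expression near $1$. The decomposition $J_\kappa=e^{ix}W+e^{-ix}\overline W$ is preserved because on the small range $\frac12 J_\kappa+\frac12 J_\kappa=J_\kappa$ with $J_\kappa$ real. The Taylor series $J_\kappa(x)=\sum_m \frac{(-1)^m(x/2)^{\kappa+2m}}{m!\,(m+\kappa)!}$ then gives $x^j\frac{d^j}{dx^j}\bigl(e^{-ix}J_\kappa(x)\bigr)\ll x^\kappa\le x$ for positive integer $\kappa$. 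The transition region contributes bounded smooth terms, which are $\ll 1\asymp x/(1+x)^{3/2}$ there. So the care needed is in interpreting the lemma as the Iwaniec--Michel version, with $W$ modified near $0$, and in checking that the gluing keeps the identity exact.
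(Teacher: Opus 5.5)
Your route is the one the paper intends. The paper gives no argument beyond citing Watson (p.~206) and Iwaniec--Michel, plus the parenthetical ``Taylor expansion for $0<x\le 1$, integral for $x\ge 1$''. Your Euler-operator argument for $x\ge 1$ is a correct way to fill that in. Your diagnosis near $0$ is also correct, and it is a genuine defect of the statement as written. The integral defines a multiple of $e^{-ix}H^{(1)}_\kappa(x)$, so $|W(x)|\gg x^{-\kappa}$ as $x\to 0^{+}$. Hence $x^jW^{(j)}(x)\ll x/(1+x)^{3/2}$ fails for the literal $W$ on $(0,1]$. It only holds after redefining $W$ near $0$ by a smooth partition of unity, as you propose.

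The step that fails is your claim that collecting constants ``gives exactly'' the stated $W$. Do your own substitution on the piece at $t=1$: it is traversed downward, with $dt=i\,\dd y/x$ and $(1-t)^{\kappa-1/2}=e^{-i\pi(\kappa-1/2)/2}(y/x)^{\kappa-1/2}$. This yields
\[
W(x)=\frac{e^{-i(\pi\kappa/2+\pi/4)}}{2\,\Gamma(\kappa+\frac12)}\sqrt{\frac{2}{\pi x}}\int_0^\infty e^{-y}\Bigl(y\Bigl(1+\frac{iy}{2x}\Bigr)\Bigr)^{\kappa-1/2}\dd y,
\]
that is, $e^{ix}W(x)=\frac12H^{(1)}_\kappa(x)$. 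You write this yourself later, and it equals the stated $W$ divided by $2(-1)^\kappa$. The phase is $(-i)^{\kappa+1/2}$; $i^{\kappa+1/2}$ is the phase of $\overline{W}$, coming from the $t=-1$ piece.

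A quick check: as $x\to\infty$ the stated $W$ gives $e^{ix}W+e^{-ix}\overline{W}=2\sqrt{2/(\pi x)}\cos(x+\pi\kappa/2-\pi/4)+O(x^{-3/2})$. This is $2(-1)^\kappa$ times the classical main term $\sqrt{2/(\pi x)}\cos(x-\pi\kappa/2-\pi/4)$ of $J_\kappa(x)$.

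The discrepancy does not affect any of the derivative bounds, but it does matter for your gluing. The function $\rho\cdot\frac12e^{-ix}J_\kappa+(1-\rho)W_H$ reproduces $J_\kappa$ on the transition region only if $e^{ix}W_H+e^{-ix}\overline{W_H}=J_\kappa$ exactly. With the stated normalization you would get $\rho J_\kappa+2(-1)^\kappa(1-\rho)J_\kappa$ instead. So you should prove the lemma with the corrected constant rather than assert that the stated one comes out.
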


\subsection{Oscillatory integrals}
Let $\mathcal{F}$ be an index set and $X=X_T: \mathcal{F} \rightarrow \mathbb{R}_{\geq 1}$ be a function of $T \in \mathcal{F}$.
 A family $\left\{w_T\right\}_{T \in \mathcal{F}}$ of smooth functions supported on a product of dyadic intervals in $\mathbb{R}_{>0}^d$ is called $X$-inert if for each $j=\left(j_1, \cdots, j_d\right) \in \mathbb{Z}_{\geq 0}^d$ we have
\begin{equation}\label{eqn: def_inert}
    C_{\mathcal{F}}\left(j_1, \cdots, j_d\right):=\sup _{T \in \mathcal{F}} \sup _{\left(x_1, \cdots, x_d\right) \in \mathbb{R}_{>0}^d} X_T^{-j_1-\cdots-j_d}\left|x_1^{j_1} \cdots x_d^{j_d} w_T^{\left(j_1, \cdots, j_d\right)}\left(x_1, \ldots, x_d\right)\right|<\infty .
\end{equation}
We will frequently show that oscillatory integrals are very small using integration by parts and analyze  their main contributions. We quote here some useful lemmas from \cite[Lemma 8.1]{BKY13} and \cite[Main Theorem]{KPY19}.
\begin{lemma}\label{lemma: spl_1}
 Suppose that $w=w_T(t)$ is a family of $X$-inert functions, with compact support on $[Z, 2 Z]$, so that for all $j=0,1, \ldots$ we have the bound $w^{(j)}(t) \ll(Z / X)^{-j}$. Also suppose that $\phi$ is smooth and satisfies, for $j=2,3, \cdots$, $\phi^{(j)}(t) \ll \frac{Y}{Z^j}$ for some $R \geq 1$ with $Y / X \geq R$ and all $t$ in the support of $w$. Let
    \begin{equation}
    I=\int_{-\infty}^{\infty} w(t) e^{i \phi(t)} \dd t.
    \end{equation}
    If $\left|\phi^{\prime}(t)\right| \gg \frac{Y}{Z}$ for all $t$ in the support of $w$, then $I \ll_A Z R^{-A}$ for $A$ arbitrarily large.
\end{lemma}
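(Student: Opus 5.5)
This is the non-stationary phase lemma \cite[Lemma 8.1]{BKY13}, and the natural route is repeated integration by parts. Write $e^{i\phi(t)} = \frac{1}{i\phi'(t)}\frac{d}{dt}e^{i\phi(t)}$ and define the differential operator $D$ by
\begin{equation*}
Dg(t) := -\frac{d}{dt}\left(\frac{g(t)}{i\phi'(t)}\right).
\end{equation*}
Since $w$ is compactly supported in $[Z,2Z]$, there are no boundary terms, so
\begin{equation*}
I = \int_{\mathbb{R}} (D^A w)(t)\, e^{i\phi(t)}\dd t
\end{equation*}
for every integer $A\ge 0$. Bounding the integrand trivially over an interval of length $Z$ reduces the lemma to a pointwise estimate on the support:
\begin{equation*}
D^A w(t) \ll_A R^{-A}.
\end{equation*}

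To get this estimate, I will track the structure of $D^A w$ by induction on $A$. The claim is that $D^A w$ is a linear combination, with coefficients depending only on $A$, of terms of the form
\begin{equation*}
\frac{w^{(\nu_0)}(t)\,\phi^{(\nu_1)}(t)\cdots\phi^{(\nu_l)}(t)}{\phi'(t)^{A+l}},
\end{equation*}
with each $\nu_i\ge 2$ for $i\ge 1$ and $\nu_0+\sum_{i\ge1}(\nu_i-1)=A$. This follows from the quotient rule: each application of $D$ either differentiates $w$, differentiates some $\phi^{(\nu_i)}$ (raising $\nu_i$ by one), or differentiates a power of $\phi'$, which produces a new factor $\phi''$ and one more $\phi'$ in the denominator.

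Now insert the hypotheses:
\begin{equation*}
w^{(\nu_0)}\ll (Z/X)^{-\nu_0},\qquad \phi^{(\nu_i)}\ll Y/Z^{\nu_i},\qquad |\phi'|^{-1}\ll Z/Y.
\end{equation*}
A single term is then bounded by
\begin{equation*}
\left(\frac{X}{Z}\right)^{\nu_0}\left(\frac{Y}{Z}\right)^{-A}\prod_{i=1}^{l} Y^{-1}Z^{-(\nu_i-1)}.
\end{equation*}
The powers of $Z$ cancel exactly because of the constraint $\nu_0+\sum_i(\nu_i-1)=A$. What remains is $X^{\nu_0}Y^{-\nu_0}\cdot Y^{-\sum_i(\nu_i-1)}Y^{-l}\cdot Y^{\sum_i \nu_i}$, which simplifies to
\begin{equation*}
(X/Y)^{\nu_0}\,Y^{-(A-\nu_0)}\cdot Y^{0}.
\end{equation*}

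More carefully, each of the $A-\nu_0$ derivatives not falling on $w$ contributes a factor $1/Y$. Since $X\ge1$ gives $Y^{-1}\le X/Y\le R^{-1}$, and $X/Y\le R^{-1}$ holds by hypothesis, each term is $\ll R^{-A}$. This gives $I\ll_A Z R^{-A}$.

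The only genuinely delicate step is the combinatorial bookkeeping showing the exponent identity $\nu_0+\sum_i(\nu_i-1)=A$ is preserved under $D$. I expect this to be straightforward once the induction is set up as above.
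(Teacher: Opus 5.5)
Your proof is correct. It is the standard repeated integration-by-parts argument behind \cite[Lemma 8.1]{BKY13}, which the paper cites rather than reproves; your structural claim about $D^A w$ and the final per-term bound $X^{\nu_0}Y^{-A}\le R^{-A}$ are both right. One slip needs fixing: the displayed per-term bound should be $(X/Z)^{\nu_0}(Y/Z)^{-A}\prod_{i=1}^{l}Z^{-(\nu_i-1)}$, with no factor $Y^{-1}$ per $\phi^{(\nu_i)}$, because $\prod_i YZ^{-\nu_i}$ cancels the extra $(Z/Y)^{l}$ coming from $(\phi')^{-(A+l)}$; your subsequent ``$Y^{0}$'' line is garbled, although it lands on the correct $(X/Y)^{\nu_0}Y^{-(A-\nu_0)}$.
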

\begin{proof}
    See \cite{BKY13} and \cite{KPY19}.
\end{proof}
\begin{lemma}\label{lemma: spl_2}
  Suppose $w_T$ is $X$-inert in $t_1, \cdots, t_d$, supported on $t_1 \asymp Z$ and $t_i \asymp X_i$ for $i=2, \cdots, d$. Suppose that on the support of $w_T, \phi=\phi_T$ satisfies
\[
\frac{\partial^{a_1+a_2+\cdots+a_d}}{\partial t_1^{a_1} \cdots \partial t_d^{a_d}} \phi\left(t_1, t_2, \cdots, t_d\right) \ll_{C_{\mathcal{F}}} \frac{Y}{Z^{a_1}} \frac{1}{X_2^{a_2} \cdots X_d^{a_d}},
\]
for all $a_1, \cdots, a_d \in \mathbb{N}$ with $a_1 \geq 1$. Suppose $\phi^{\prime \prime}\left(t_1, t_2, \ldots, t_d\right) \gg \frac{Y}{Z^2}$ (here and later, $\phi^{\prime}$ and $\phi^{\prime \prime}$ denote the derivative with respect to $\left.t_1\right)$, for all $t_1, t_2, \cdots, t_d$ in the support of $w_T$, and for each $t_2, \cdots, t_d$ in the support of $\phi$ there exists $t_0 \asymp Z$ such that $\phi^{\prime}\left(t_0, t_2, \ldots, t_d\right)=0$. Suppose that $Y / X^2 \geq R$ for some $R \geq 1$. Then
\begin{equation}\label{eqn: spm}
I=\int_{\mathbb{R}} e^{i \phi\left(t_1, \cdots, t_d\right)} w_T\left(t_1, \cdots, t_d\right) d t_1=\frac{Z}{\sqrt{Y}} e^{i \phi\left(t_0, t_2, \cdots, t_d\right)} W_T\left(t_2, \cdots, t_d\right)+O_A\left(Z R^{-A}\right),
\end{equation}
for some $X$-inert family of functions $W_T$, and where $A>0$ may be taken to be arbitrarily large. The implied constant in equation \eqref{eqn: spm} depends only on $A$ and on $C_{\mathcal{F}}$ defined in formula \eqref{eqn: def_inert}.
\begin{proof}
    See \cite{BKY13} and \cite{KPY19}.
\end{proof}
\end{lemma}
\medskip
\textbf{Notation.}
Throughout the paper, $\varepsilon$ is an arbitrarily small positive number that may change from line to line.
We use $y\asymp Y$ to denote that $c_1 Y\leq y\leq c_2 Y$ for some positive constants $c_1$ and $c_2$. We say the contribution is negligible if it is $\ll_B q^{-B}$ for an arbitrary constant $B>0$.

\section{Applying the Petersson trace formula}\label{Applying the Petersson trace formula}
Using the approximate functional equations, we get
\begin{align*}
    \sum_{f\in \mathcal{B}_k{(q,\chi)}}&\omega_f^{-1}L(1/2,f)L(1/2,\sym^2f)=S_1+S_2+O(q^{-100}),
\end{align*}
where
\begin{align*}
&S_1=2\sum_{f\in \mathcal{B}_k{(q,\chi)}}\omega_f^{-1}\sum_{m=1}^{\infty}\frac{\lambda_f(m)}{\sqrt{m}}V\left(\frac{m}{\sqrt{q}}\right)\sum_{n=1}^{\infty}\frac{\lambda_f(n^2)}{\sqrt{n}}W\left(\frac{n}{\sqrt{q}}\right),\\
&S_2=2\sum_{f\in \mathcal{B}_k{(q,\chi)}}\omega_f^{-1}i^{k}\overline{\lambda}_f(q)\overline{\tau}_{\chi}q^{-\frac{1}{2}}\sum_{m=1}^{\infty}\frac{\overline{\lambda_f(m)}}{\sqrt{m}}V\left(\frac{m}{\sqrt{q}}\right)\sum_{n=1}^{\infty}\frac{\lambda_f(n^2)}{\sqrt{n}}W\left(\frac{n}{\sqrt{q}}\right).
\end{align*}
By Hecke relation \eqref{heckerelation}, $\lambda_{f}(q)\lambda_{f}(m)=\lambda_{f}(qm)$, and by \eqref{dualoflambda}, $\lambda_{f}(m)=\chi(m)\overline{\lambda_{f}(m)}$. Thus,
\begin{align*}
    S_1=2\sum_{m=1}^{\infty}\frac{\chi(m)}{\sqrt{m}}V\left(\frac{m}{\sqrt{q}}\right)\sum_{n=1}^{\infty}\frac{1}{\sqrt{n}}W\left(\frac{n}{\sqrt{q}}\right)\sum_{f\in \mathcal{B}_k{(q,\chi)}}\omega_f^{-1}\overline{\lambda_f(m)}\lambda_f(n^2),
\end{align*}
\begin{align*}
    S_2=2i^{k}\overline{\tau}_{\chi}q^{-\frac{1}{2}}\sum_{m=1}^{\infty}\frac{1}{\sqrt{m}}V\left(\frac{m}{\sqrt{q}}\right)\sum_{n=1}^{\infty}\frac{1}{\sqrt{n}}W\left(\frac{n}{\sqrt{q}}\right)\sum_{f\in \mathcal{B}_k{(q,\chi)}}\omega_f^{-1}\overline{\lambda_f(qm)}\lambda_f(n^2).
\end{align*}

Then applying the Petersson trace formula, we get
\begin{align*}
    &S_1=\mathcal{D}_1+2\pi i^{-k}\mathcal{O}_1,\\
    &S_2=\mathcal{D}_2+2\pi i^{-k}\mathcal{O}_2,
\end{align*}
where
\begin{align}\label{D1}
    \mathcal{D}_1=2\sum_{n=1}^{\infty}\frac{\chi(n^2)}{n^{3/2}}V\left(\frac{n^2}{\sqrt{q}}\right)W\left(\frac{n}{\sqrt{q}}\right),
\end{align}
\begin{align}\label{D2}
\mathcal{D}_2=2i^{k}\overline{\tau_{\chi}}\sum_{\substack{n=1\\n^2\equiv0\ (\mod q)}}^{\infty}\frac{1}{n^{3/2}}V\left(\frac{n^2}{q^{3/2}}\right)W\left(\frac{n}{\sqrt{q}}\right)
\end{align}
are the diagonal terms respectively and 
\begin{align}\label{O1}
    \mathcal{O}_1=2\sum_{m=1}^{\infty}\frac{\chi(m)}{\sqrt{m}}V\left(\frac{m}{\sqrt{q}}\right)\sum_{n=1}^{\infty}\frac{1}{\sqrt{n}}W\left(\frac{n}{\sqrt{q}}\right)\sum_{c=1}^{\infty}\frac{S_{\chi}(m,n^2;cq)}{cq}J_{k-1}\left(\frac{4\pi \sqrt{m}n}{cq}\right),
\end{align}
\begin{align}\label{O2}
\mathcal{O}_2=2i^k\overline{\tau_\chi}q^{-1/2}\sum_{m=1}^{\infty}\frac{1}{\sqrt{m}}V\left(\frac{m}{\sqrt{q}}\right)\sum_{n=1}^{\infty}\frac{1}{\sqrt{n}}W\left(\frac{n}{\sqrt{q}}\right)\sum_{c=1}^{\infty}\frac{S_{\chi}(mq,n^2;cq)}{cq}J_{k-1}\left(\frac{4\pi \sqrt{m}n}{c\sqrt{q}}\right)
\end{align}
are the off-diagonal terms respectively.

\section{The diagonal term}\label{The diagonal term}
In this section, we deal with the diagonal terms $\mathcal{D}_1$ and $\mathcal{D}_2$. For $\mathcal{D}_1$, by definition of $V$ and $W$, we get
\begin{align*}
     \mathcal{D}_1=2\frac{1}{(2\pi i)^2}\int_{(1)}\int_{(1)}\frac{L_{\infty}(1/2+u, f)}{L_{\infty}(1/2, f)}\frac{L_{\infty}(1/2+v,\sym^2 f)}{L_{\infty}(1/2,\sym^2 f)}q^{\frac{u+v}{2}}\times\\
     \zeta^{(q)}(3/2+2u+v)\zeta^{(q)}(1+2v)\frac{G_1(u)}{u}\frac{G_2(v)}{v}dudv.
\end{align*}
Moving the line of the integration of $u$ to $\Re u=-\varepsilon$ and picking a residue at $u=0$, we get
\begin{align*}
    \mathcal{D}_1&= \mathcal{R}_1+\mathcal{I}_1,
\end{align*}
where
\begin{align*}
        \mathcal{R}_1= 2\frac{1}{2\pi i}\int_{(1)}\frac{L_{\infty}(1/2+v,\sym^2 f)}{L_{\infty}(1/2,\sym^2 f)}q^{\frac{v}{2}}\zeta^{(q)}(3/2+v)\zeta^{(q)}(1+2v)\frac{G_2(v)}{v}dv,         
\end{align*}
\begin{align*}
        \mathcal{I}_1=2\frac{1}{(2\pi i)^2}\int_{(1)}\int_{(-\varepsilon)}\frac{L_{\infty}(1/2+u, f)}{L_{\infty}(1/2, f)}\frac{L_{\infty}(1/2+v,\sym^2 f)}{L_{\infty}(1/2,\sym^2 f)}q^{\frac{u+v}{2}}\times\\
        \zeta^{(q)}(3/2+2u+v)\zeta^{(q)}(1+2v)\frac{G_1(u)}{u}\frac{G_2(v)}{v}dudv.
\end{align*}

Expanding the factors that appear above into Laurent series, we get
\begin{align*}
    &\frac{L_{\infty}(1/2+v,\sym^2 f)}{L_{\infty}(1/2,\sym^2 f)}\zeta^{(q)}(1+2v)\frac{G_2(v)}{v}\\
    &=\left(\frac{1}{v}+ \frac{{L}'_{\infty}(1/2,\sym^2 f)}{L_{\infty}(1/2,\sym^2 f)}+\cdots\right) \left (  (1-1/q)\frac{1}{2v}+(1-1/q)\left (  \gamma+\frac{\log q}{q-1}\right )+\cdots\right )    \\
    &=(1-1/q)\frac{1}{2v^2}+\left((1-1/q)\left(  \gamma+\frac{\log q}{q-1}\right)+\frac{(1-1/q)}{2}\frac{{L}'_{\infty}(1/2,\sym^2 f)}{L_{\infty}(1/2,\sym^2 f)}\right)\frac{1}{v}+\cdots ,       
    \end{align*}
    \begin{align*}
    q^{\frac{v}{2}}\zeta^{(q)}(3/2+v)=\zeta^{(q)}(3/2)+\left(\frac{\log q}{2}\zeta^{(q)}(3/2)+{\zeta^{(q)}}'(3/2)\right)v+\cdots.
\end{align*}
For $\mathcal{R}_1$, moving the line of integration to $\Re v=-1/2+\varepsilon$, picking up a double pole at $v=0$, we get
\begin{align*}
\mathcal{R}_1=\mathcal{R}_{0}+\mathcal{I}_{-1/2+\varepsilon},
\end{align*}
where
\begin{align*}
    \mathcal{R}_{0}=2\zeta^{(q)}(3/2)\left [ (1-1/q)\left(  \gamma+\frac{\log q}{q-1}\right)+\frac{(1-1/q)}{2}\frac{{L}'_{\infty}(1/2,\sym^2 f)}{L_{\infty}(1/2,\sym^2 f)} \right ] \\
    +(1-1/q)\left[\frac{\log q}{2}\zeta^{(q)}(3/2)+{\zeta^{(q)}}'(3/2)\right],
\end{align*}
and
\begin{align*}
    \mathcal{I}_{-1/2+\varepsilon}= 2\frac{1}{2\pi i}\int_{(-1/2+\varepsilon)}\frac{L_{\infty}(1/2+v,\sym^2 f)}{L_{\infty}(1/2,\sym^2 f)}q^{\frac{v}{2}}\zeta^{(q)}(3/2+v)\zeta^{(q)}(1+2v)\frac{G_2(v)}{v}dv\ll q^{-1/4+\varepsilon}.
\end{align*}

For $\mathcal{I}_1$, moving the integration of $v$ to $\Re v=-1/2+\varepsilon$, we can pick up a double pole at $v=0$ and a simple pole at $v=-1/2-2u$. Thus 
\begin{align*}  \mathcal{I}_1=\mathcal{R}_{1,v=0}+\mathcal{R}_{1,v=-1/2-2u}+\mathcal{I}_{-\varepsilon,-1/2+\varepsilon},
\end{align*}
where 
\begin{align*}
   &\mathcal{R}_{1,v=0}=2\frac{1}{2\pi i}\int_{(-\varepsilon)}\frac{L_{\infty}(1/2+u, f)}{L_{\infty}(1/2, f)}q^{\frac{u}{2}}H(u)\frac{G_1(u)}{u}du,\\
   &H(u)=1/2{\zeta^{(q)}}'(3/2+2u)+(1-1/q)\left(\gamma+\frac{\log q}{q-1}\right)\zeta^{(q)}(3/2+2u)\\
   &+\frac{1}{2}\zeta^{(q)}(3/2+2u)\left(\frac{\log q}{2}+\frac{{L}'_{\infty}(1/2,\sym^2 f)}{L_{\infty}(1/2,\sym^2 f)}\right),
\end{align*}
\begin{align*}
    \mathcal{R}_{1,v=-1/2-2u}&=2(1-1/q)\frac{1}{2\pi i}\int_{(-\varepsilon)}\frac{L_{\infty}(1/2+u, f)}{L_{\infty}(1/2, f)}\frac{L_{\infty}(-2u,\sym^2 f)}{L_{\infty}(1/2,\sym^2 f)}q^{-\frac{u}{2}-1/4}\times\\
    &\zeta^{(q)}(-4u)\frac{G_1(u)}{u}\frac{G_2(-1/2-2u)}{-1/2-2u}du,
\end{align*}
\begin{align*}
        \mathcal{I}_{-\varepsilon,-1/2+\varepsilon}=2\frac{1}{(2\pi i)^2}\int_{(-\varepsilon)}\int_{(-1/2+\varepsilon)}\frac{L_{\infty}(1/2+u, f)}{L_{\infty}(1/2, f)}\frac{L_{\infty}(1/2+v,\sym^2 f)}{L_{\infty}(1/2,\sym^2 f)}q^{\frac{u+v}{2}}\times\\
        \zeta^{(q)}(3/2+2u+v)\zeta^{(q)}(1+2v)\frac{G_1(u)}{u}\frac{G_2(v)}{v}dudv.
\end{align*}

For $\mathcal{R}_{1,v=0}$, move the line of $u$ integration to $\Re u=-1/2+\varepsilon$, the double zero of $G_1(u)$ cancels the double pole of $H(u)$ at $u=-1/4$, thus 
\begin{align}
    \mathcal{R}_{1,v=0}\ll q^{-1/4+\varepsilon}.
\end{align}
For $\mathcal{R}_{1,v=-1/2-2u}$, we have
\begin{align}
    \mathcal{R}_{1,v=-1/2-2u}\ll q^{-1/4+\varepsilon}.
\end{align}
For $\mathcal{I}_{-\varepsilon,-1/2+\varepsilon}$, we have
\begin{align}
    \mathcal{I}_{-\varepsilon,-1/2+\varepsilon}\ll q^{-1/4+\varepsilon}.
\end{align}

Recall that $\zeta^{(q)}(s)=(1-q^{-s})\zeta(s)$, we have
\begin{align}\label{contribution of D1}
\mathcal{D}_1 =&\frac{\zeta(3/2)}{2}\log q + \zeta(3/2)\left( 2\gamma - \log 2 - \frac{3}{2}\log \pi + \frac{{\zeta}'(3/2)}{\zeta(3/2)} \right.\nonumber\\
&\left.+ \psi\left(k-\frac{1}{2}\right) + \frac{1}{2}\psi\left(\frac{3}{4}-\frac{\nu}{2}\right) \right) + O(q^{-1/4+\varepsilon}).
\end{align}

For $\mathcal{D}_2$, recalling \eqref{D2}, the condition $mq=n^2$ implies $q\mid n^2$. Since $q$ is prime, we have $q\mid n$. Let $n=qr$, we get 
\begin{align*}
\mathcal{D}_2=2i^{k}\overline{\tau_{\chi}}\sum_{r=1}^{\infty}\frac{1}{r^{3/2}q^{3/2}}V\left(\frac{r^2q^2}{q^{3/2}}\right)W\left(\frac{qr}{\sqrt{q}}\right),
\end{align*}
 which is negligible as the functions $W$ and $V$ rapidly decay.

We conclude that the diagonal contribution is \eqref{contribution of D1}.

\section{The off-diagonal term}\label{The off-diagonal term}
In this section, we deal with the off-diagonal terms $\mathcal{O}_1$ and $\mathcal{O}_2$.
\subsection{Analysis of $\mathcal{O}_1$}
For $\mathcal{O}_1$, by the rapid decay of $V$ and $W$, we can restrict the sums to $m\ll q^{1/2+\varepsilon}$ and $n\ll q^{1/2+\varepsilon}$ up to a negligible error. Recall that $q$ is a prime, which yields $(nm,q)=1$; we have $(m,n^2,cq)^{1/2}\leq (m,n^2,c)^{1/2}\leq c^{1/2}$. Thus by Weil's bound, we have
\begin{align}\label{Weilbound}
    S_{\chi}(m,n^2;cq)\ll (cq)^{1/2}\tau(cq)(m,n^2,cq)^{1/2}\ll c^{1+\varepsilon}q^{1/2+\varepsilon}.
\end{align}
Using the bound $J_{\kappa}(x)\ll x^{\kappa}$ and \eqref{Weilbound}, recalling that $k\geq 4$, we see that 
\begin{align}\label{contribution of O1}
    \mathcal{O}_1\ll \sum_{m\ll q^{1/2+\varepsilon}}\frac{1}{\sqrt{m}}\sum_{n\ll q^{1/2+\varepsilon}}\frac{1}{\sqrt{n}}\sum_{c=1}^{\infty}\frac{c^{1+\varepsilon}q^{1/2+\varepsilon}}{cq}\left(\frac{\sqrt{m}n}{cq}\right)^{k-1}\nonumber\\
    \ll q^{1/4+\varepsilon} \cdot q^{1/4+\varepsilon}\cdot q^{-1/2+\varepsilon} q^{-\frac{1}{4}(k-1)+\varepsilon}\ll q^{-3/4+\varepsilon}.
\end{align}

\subsection{Analysis of $\mathcal{O}_2$}
We take a smooth dyadic subdivision of the $n$ and $m$ sums in \eqref{O2}. This leads us to sums of type
\begin{align}\label{offdiagonalterm2}
(NM)^{-1/2}\sum_{m=1}^{\infty}\mathcal{V}\left(\frac{m}{M}\right)\sum_{n=1}^{\infty}\mathcal{W}\left(\frac{n}{N}\right)\sum_{c=1}^{\infty}\frac{S_{\chi}(mq,n^2;cq)}{cq}J_{k-1}\left(\frac{4\pi \sqrt{m}n}{c\sqrt{q}}\right).
\end{align}

Here $M\ll q^{1/2+\varepsilon}$, $N\ll q^{1/2+\varepsilon}$ and the functions $\mathcal{V}$, $\mathcal{W}$ are supported in $[1,2]$, and satisfy the bound 
\begin{align*}
    \mathcal{V}^{(j)}(x), \mathcal{W}^{(j)}(x) \ll 1.
\end{align*}

Applying the Poisson summation to the $m$ sum with modulus $c$ yields
\begin{align*}
    \sum_{m=1}^{\infty}S_{\chi}(qm,n^2;cq)J_{k-1}\left(\frac{4\pi \sqrt{m}n}{c\sqrt{q}}\right)\mathcal{V}\left(\frac{m}{M}\right)=\frac{M}{c}&\sum_{m\in \mathbb{Z}}\sum_{a\ (\mod c)}S_{\chi}(aq,n^2;cq)e\left ( \frac{am}{c} \right ) \\
    &\times\int_{\mathbb{R}} \mathcal{V}(x)J_{k-1}\left(\frac{4\pi \sqrt{M}n\sqrt{x}}{c\sqrt{q}}\right)e\left ( -\frac{mMx}{c} \right ) dx.
\end{align*}

Write 
$$I:=\int_{\mathbb{R}} \mathcal{V}(x)J_{k-1}\left(\frac{4\pi \sqrt{M}n\sqrt{x}}{c\sqrt{q}}\right)e\left ( -\frac{mMx}{c} \right ) dx.$$ 
We will estimate the contribution to \eqref{offdiagonalterm2} from all $c$ in the dyadic range $C<c\leq 2C$.

For $C\gg q^{\delta-1/2}\sqrt{M}N$, i.e. $\frac{\sqrt{M}N}{c\sqrt{q}}\ll q^{-\delta}$, where $\delta>0$ is a small parameter to be chosen later, we use the bound $J_{\kappa}(x)\ll x^{\kappa}$. Integrating by parts shows that $I$ is negligible unless
\begin{align*}
  \left | m \right | \ll \frac{cq^{\varepsilon}}{M}.
\end{align*}

For such $m$, we have
\begin{align*}
        I\ll \left(\frac{\sqrt{M}n}{c\sqrt{q}}\right)^{k-1}.
\end{align*}

Since $n\asymp N\ll q^{1/2+\varepsilon}$ yields $(n,q)=1$, by Weil's bound, we get
\begin{align*}
    \sum_{a\ (\mod c)}S_{\chi}(aq,n^2;cq)e\left ( \frac{am}{c} \right )&\ll \sum_{a\ (\mod c)}(aq,n^2,cq)^{1/2}(cq)^{1/2}\tau(cq)\ll\sum_{a\ (\mod c)}(a,n^2,c)^{1/2}(cq)^{1/2+\varepsilon}\\
    &\ll (cq)^{1/2+\varepsilon}\sum_{d \mid c} \sum_{\substack{a\ (\mod c)\\ d\mid a}}d^{1/2}\ll(cq)^{1/2+\varepsilon} \sum_{d\mid c}\frac{c}{d^{1/2}}\ll c^{3/2+2\varepsilon}q^{1/2+\varepsilon}. 
\end{align*}
Thus the contribution of this part is 
\begin{align}\label{besselsmall}
&\ll (MN)^{-1/2}\sum_{n\geq 1} \mathcal{W}\left(\frac{n}{N}\right)\sum_{c\sim C}\frac{c^{3/2+\varepsilon}q^{1/2+\varepsilon}}{cq}\frac{M}{c}\sum_{\left | m \right |\ll \frac{cq^{\varepsilon}}{M}}\left(\frac{\sqrt{M}n}{c\sqrt{q}}\right)^{k-1} \nonumber\\
&\ll \frac{M^{1/4}N^3q^{\varepsilon}}{q^{5/4}}q^{-\delta(k-5/2)}\ll q^{-1/8-\delta(k-5/2)}.
\end{align}

Now we assume that $C\ll q^{\delta-1/2}\sqrt{M}N$. In this range, we will show the arithmetic part has square root cancellation and use stationary phase method to analyze the analytic part which includes the oscillation integral $I$.

We start with the arithmetic part. In particular we have $(c,q)=1$ since we assume $q$ is a prime. We need to evaluate the character sum $$\mathfrak{S}:=\sum_{a\ (\mod c)}S_{\chi}(qa,n^2;cq)e\left(\frac{am}{c}\right).$$

Opening the Kloosterman sum, we see that
\begin{align*}
\mathfrak{S}=\sum_{a\ (\mod c)}\sum_{\substack{d\ (\mod cq)\\(d,cq)=1}}\chi(d)e\left(\frac{qad+n^2\bar{d}}{cq}+\frac{am}{c}\right)&=\sum_{\substack{d\ (\mod cq)\\(d,cq)=1}}\chi(d)e\left(\frac{n^2\bar{d}}{cq}\right)\sum_{a\ (\mod c)}e\left(\frac{a(m+d)}{c}\right)\\
&=c\sum_{\substack{d\ (\mod cq)\\(d,cq)=1\\c\mid d+m}}\chi(d)e\left(\frac{n^2\bar{d}}{cq}\right).
\end{align*}

By the Chinese Remainder Theorem, we can write $d\ (\mod cq)$ as $d_cq\bar{q}+d_qc\bar{c}$, where $d_c \ (\mod c)$, $d_q \ (\mod q)$, $q\bar{q}\equiv 1\ (\mod c)$ and $c\bar{c}\equiv 1\ (\mod q)$, since $(c,q)=1$. Then the condition $c\mid d+m$ implies $d_c\equiv-m\ (\mod c)$. By $(d,cq)=1$, we get $(m,c)=1$, so $\bar{m}\ (\mod c)$ exists. Recall that $\chi$ is real, we have
\begin{align}\label{src}
    \mathfrak{S}&=c\sum_{\substack{d_q\ (\mod q)\\(d_q,q)=1}}\chi(d_q)e\left(\frac{n^2\left(\bar{d_q}c\bar{c}-\bar{m}q\bar{q}\right)}{cq}\right)\nonumber\\
    &=ce\left(\frac{-n^2\bar{m}\bar{q}}{c}\right)\sum_{\substack{d_q\ (\mod q)\\(d_q,q)=1}}\chi(d_q)e\left(\frac{n^2\bar{d_q}\bar{c}}{q}\right)=ce\left(\frac{-n^2\bar{m}\bar{q}}{c}\right)\chi(n^2\bar{c})\tau(\bar{\chi})\ll cq^{1/2}.
\end{align}

For $I$, by lemma \ref{besselfunction}, we can write $I=I_{+}+I_{-}$, where 
$$I_{+}=\int_{\mathbb{R}} \mathcal{V}(x)W\left(\frac{4\pi \sqrt{M}n\sqrt{x}}{c\sqrt{q}}\right)e\left (  \frac{2 \sqrt{M}n\sqrt{x}}{c\sqrt{q}} -\frac{mMx}{c} \right ) dx,$$
and 
$$I_{-}=\int_{\mathbb{R}} \mathcal{V}(x)\overline{W}\left(\frac{4\pi \sqrt{M}n\sqrt{x}}{c\sqrt{q}}\right)e\left ( -\frac{2 \sqrt{M}n\sqrt{x}}{c\sqrt{q}} -\frac{mMx}{c} \right ) dx,$$

Let 
$$z(x)=\frac{4\pi \sqrt{M}n\sqrt{x}}{c\sqrt{q}},
$$  
$$\phi_{+}(x)=\frac{4\pi \sqrt{M}n\sqrt{x}}{c\sqrt{q}} -\frac{2\pi mMx}{c}$$
and 
$$\phi_{-}(x)=-\frac{4\pi \sqrt{M}n\sqrt{x}}{c\sqrt{q}} -\frac{2\pi mMx}{c}.$$

We have 
\begin{align*}
    &{\phi}'_{+}(x)=\frac{2\pi \sqrt{M}n}{c\sqrt{qx}}-\frac{2\pi Mm}{c},\\
    &{\phi}''_{+}(x)=-\frac{\pi \sqrt{M}n}{c\sqrt{q}}x^{-3/2} \asymp \frac{ \sqrt{M}n}{c\sqrt{q}},\\
    &{\phi}^{(j)}_{+}(x)\ll_{j} \frac{ \sqrt{M}n}{c\sqrt{q}}.
\end{align*}

Let ${\phi}'_{+}(x_0)=0$, we have $x_0=\frac{n^2}{m^2Mq}$.

\subsubsection{The range of small $c$: $C\ll q^{-\varepsilon-1/2}\sqrt{M}N$}

In this range we have $\frac{\sqrt{M}N}{c\sqrt{q}}\gg q^{\varepsilon}$.
\begin{itemize}
\item\textbf{Case I:}
If $x_0\in \left [  1,2\right ]$, i.e. $ \frac{1}{\sqrt{2}}\frac{n}{\sqrt{Mq}}\leq \left | m \right | \leq \frac{n}{\sqrt{Mq}}$, then we take $X=1,\ Z=1$, $Y=\frac{\sqrt{M}n}{c\sqrt{q}}+\frac{M \left | m  \right | }{c}$, we have $Y/X^2\geq \frac{\sqrt{M}n}{c\sqrt{q}}>1$. Thus by lemma \ref{lemma: spl_2} with $R=\frac{\sqrt{M}n}{c\sqrt{q}}$, we have
\begin{align*}
    I_{+}\asymp\frac{e^{\frac{2\pi in^2}{cmq}}}{\sqrt{Y}}F_T(x_0)+O_{A}(Y^{-A})\ll \frac{c^{1/2}q^{1/4}}{M^{1/4}N^{1/2}}.
\end{align*}
The contribution of this part to \eqref{offdiagonalterm2} is
\begin{equation}\label{stationaryI+1}
    \ll(MN)^{-1/2}\sum_{n\geq 1} \mathcal{W}(\frac{n}{N})\sum_{c\sim C}\frac{cq^{1/2}}{cq}\frac{M}{c}\sum_{\left | m \right |\asymp\frac{n}{\sqrt{Mq}}}\frac{c^{1/2}q^{1/4}}{M^{1/4}N^{1/2}}\ll \frac{C^{1/2}N}{M^{1/4}q^{3/4}}\ll q^{-1/4+\varepsilon}.
\end{equation}

\item\textbf{Case II:} If $x_0<1$, i.e. $\left | m \right | > \frac{n}{\sqrt{Mq}}$, we can use lemma \ref{lemma: spl_1}. Take $X=1$, $Z=1$, $Y=\frac{Mm}{c}$, we have $Y/X^2\geq \frac{\sqrt{M}n}{c\sqrt{q}}>q^{\varepsilon}>1$. Thus by lemma \ref{lemma: spl_1} with $R=\frac{Mm}{c}$, we have
\begin{equation*}
    I_{+}\ll \left(\frac{Mm}{c}\right)^{-A}.
\end{equation*}
The contribution of this part to \eqref{offdiagonalterm2} is
\begin{equation}\label{outstationaryI+1}
    \ll(MN)^{-1/2}q^{\varepsilon}\sum_{n\geq 1} \mathcal{W}(\frac{n}{N})\sum_{c\sim C}\frac{cq^{1/2}}{cq}\frac{M}{c}\sum_{\left | m \right |>\frac{n}{\sqrt{Mq}}}\left(\frac{Mm}{c}\right)^{-A}\ll q^{-(A-1)\varepsilon}N^{3/2}.
\end{equation}

Thus the contribution is negligible.

\item\textbf{Case III:} If $x_0>2$, i.e. $\left | m \right | < \frac{n}{\sqrt{2Mq}}$, we can use lemma \ref{lemma: spl_1}. Take $X=1$, $Z=1$, $Y=\frac{\sqrt{M}n}{c\sqrt{q}}$, we have $Y/X^2\geq \frac{\sqrt{M}n}{c\sqrt{q}}>q^{\varepsilon}>1$. Thus by lemma \ref{lemma: spl_1} with $R=\frac{\sqrt{M}n}{c\sqrt{q}}$, we have
\begin{equation*}
    I_{+}\ll \left(\frac{\sqrt{M}n}{c\sqrt{q}}\right)^{-A}.
\end{equation*}
The contribution of this part to \eqref{offdiagonalterm2} is
\begin{equation}\label{outstationaryI+2}
    \ll(MN)^{-1/2}\sum_{n\geq 1} \mathcal{W}(\frac{n}{N})\sum_{c\sim C}\frac{cq^{1/2}}{cq}\frac{M}{c}\sum_{\left | m \right | \ll \frac{n}{\sqrt{Mq}}}\left(\frac{\sqrt{M}n}{c\sqrt{q}}\right)^{-A}\ll N^{3/2}q^{-1-(A-1)\varepsilon}.
\end{equation}

Thus the contribution is negligible.
\end{itemize}
Thus the total contribution from the small $c$ range is $O(q^{-1/4+\varepsilon})$.

\subsubsection{The range of intermediate $c$: $q^{-\varepsilon-1/2}\sqrt{M}N\ll C\ll q^{\delta-1/2}\sqrt{M}N$}

In this range we have $q^{-\delta}\ll \frac{\sqrt{M}N}{c\sqrt{q}}\ll q^{\varepsilon}$.
\begin{itemize}
    \item \textbf{Case I:}
If $x_0\in \left [  1,2\right ]$, i.e. $ \frac{1}{\sqrt{2}}\frac{n}{\sqrt{Mq}}\leq \left | m \right | \leq \frac{n}{\sqrt{Mq}}$, we use the trivial bound of the Bessel function. Combining with \eqref{src}, the contribution of this part is 
\begin{align}\label{stationaryI+2}
    \ll(MN)^{-1/2}\sum_{n\geq 1} \mathcal{W}(\frac{n}{N})\sum_{c\sim C}\frac{cq^{1/2}}{cq}\frac{M}{c}\sum_{\left | m \right |\asymp \frac{n}{\sqrt{Mq}}}1\ll \frac{N^{3/2}}{q}q^{\varepsilon}\ll q^{-1/4+\varepsilon}.
\end{align}

\item\textbf{Case II:} If $x_0<1$, i.e. $\left | m \right | > \frac{n}{\sqrt{Mq}}$, then ${\phi}'_{+}(x)\gg \frac{M\left | m \right |}{c}$. Thus by lemma \ref{lemma: spl_1}, $I$ is negligible unless $\left | m \right |\ll  \frac{cq^{\varepsilon}}{M}$.
Using trivial bound $I_{+}\ll1$, we get the contribution of this part to \eqref{offdiagonalterm2} is
\begin{equation}\label{outstationaryI+22}
    \ll(MN)^{-1/2}q^{\varepsilon}\sum_{n\geq 1} \mathcal{W}(\frac{n}{N})\sum_{c\sim C}\frac{cq^{1/2}}{cq}\frac{M}{c}\sum_{\left | m \right |\ll\frac{cq^{\varepsilon}}{M}}1\ll \frac{N^{3/2}q^{\delta+\varepsilon}}{q}\ll q^{-1/4+\delta+\varepsilon}.
\end{equation}

\item\textbf{Case III:} If $x_0>2$, i.e. $\left | m \right | < \frac{n}{\sqrt{2Mq}}$.
Using trivial bound $I_{+}\ll 1$, we get the contribution of this part to \eqref{offdiagonalterm2} is
\begin{equation}\label{outstationaryI+23}
    \ll(MN)^{-1/2}q^{\varepsilon}\sum_{n\geq 1} \mathcal{W}(\frac{n}{N})\sum_{c\sim C}\frac{cq^{1/2}}{cq}\frac{M}{c}\sum_{\left | m \right |\ll\frac{n}{\sqrt{Mq}}}1\ll  \frac{N^{3/2}q^{\varepsilon}}{q}\ll q^{-1/4+\varepsilon}.
\end{equation}
\end{itemize}

Hence the intermediate $c$ range contributes $O(q^{-1/4+\delta+\varepsilon})$.

By \eqref{stationaryI+1}, \eqref{outstationaryI+1}, \eqref{outstationaryI+2}, \eqref{stationaryI+2}, \eqref{outstationaryI+22} and \eqref{outstationaryI+23}, we see that the contribution of $I_{+}$ to \eqref{offdiagonalterm2} is 
\begin{equation}\label{bessellarge}
    \ll q^{-1/4+\delta+\varepsilon} .
\end{equation}

By a similar argument, we could prove the same upper bound for $I_{-}$. Thus by \eqref{besselsmall} and  \eqref{bessellarge}, we see that \eqref{offdiagonalterm2} is 
\begin{equation}\label{contribution of O2}
    \ll \max\left \{  q^{-1/8-\delta(k-5/2)},q^{-1/4+\delta+\varepsilon}\right \}.
\end{equation}

\section{Proof of Theorem \ref{main thm}}\label{Proof of main thm}
In this section, we need to choose a suitable $\delta$ to optimize the error term. Setting $-1/8-\delta(k-5/2)=-1/4+\delta$ gives $\delta=\delta_k:=\frac{1}{8(k-3/2)}$. Together with \eqref{contribution of D1}, \eqref{contribution of O1}, \eqref{contribution of O2}, we conclude that
    \begin{equation}
        \sum_{f\in \mathcal{B}_k(q,\chi)}\omega_f^{-1}L(1/2,f)L(1/2,\sym^2f)=\frac{\zeta(3/2)}{2}\log q +C_k+O(q^{-\frac{1}{4}+\delta_k+\varepsilon}).
    \end{equation}

\section*{Acknowledgments}
 The author would like to thank Prof.\ Bingrong Huang for suggesting the problem and for his help and encouragement. He gratefully thanks the referees for the constructive comments and recommendations.

\addcontentsline{toc}{section}{References}
\phantomsection

\bibliographystyle{abbrv}
\bibliography{reference}

\end{document}